\documentclass[11pt]{article}
\usepackage[a4paper,margin=1in]{geometry}
\usepackage{amsmath,amssymb,amsfonts,amsthm,mathtools}
\usepackage{booktabs}
\usepackage{microtype}
\usepackage[hidelinks]{hyperref}
\usepackage{cite}
\newtheorem{theorem}{Theorem}[section]
\newtheorem{lemma}[theorem]{Lemma}
\newtheorem{corollary}[theorem]{Corollary}
\newtheorem{example}[theorem]{Example}

\theoremstyle{definition}

\theoremstyle{remark}
\newtheorem{remark}[theorem]{Remark}

\usepackage{graphicx}
\title{\textbf{Invariance of Hurwitz-Stability of Polynomials of Degree Five under Positive Hadamard Powers}
}
\author{
Rola Alseidi\thanks{Department of Mathematics, Philadelphia University, Amman, Jordan.
Email: \texttt{ralseidi@philadelphia.edu.jo}}
\and
Raoufah Alsaidi\thanks{Department of Mathematics, Jordan University of Science and Technology, Irbid, Jordan.
Email: \texttt{raofe@just.edu.jo}}
\and
J\"urgen Garloff\thanks{Department of Mathematics and Statistics, University of Konstanz, 78464 Konstanz, Germany; HTWG Konstanz -- University of Applied Sciences, Faculty of Computer Science, 78405 Konstanz, Germany.
Email: \texttt{garloff@htwg-konstanz.de}}
}
\date{}
\begin{document}
\maketitle
% Abstract, keywords, AMS classification, and final title are postponed.
 \begin{abstract}
A complete characterization of the stability-preserving exponent set
for fractional Hadamard powers of a monic Hurwitz-stable polynomial  \(f\)
of degree five is presented. The stability problem is reduced to the
analysis of a single scalar function depending only on three parameters
formed from the coefficients of the polynomial \(f\). This reduction
leads to a unique stability threshold \(p_*(f)\in(0,1)\) such that
the \(p\)-th Hadamard power of \(f\) is Hurwitz stable if and only if
\(p>p_*(f)\). Consequently, the stability-preserving exponent set is
precisely \((p_*(f),\infty)\). This threshold depends smoothly on the
parameters and provides a global coordinate on the admissible parameter
region. Finally, smooth dependence is illustrated by a one-parameter
family of Hurwitz-stable polynomials whose complex-conjugate zeros
approach the imaginary axis.
\end{abstract}
\section{Introduction and Preliminaries}
\label{sec:introduction}

A real polynomial is called (Hurwitz-) stable if all its zeros lie in the
open left half of the complex plane. Throughout this paper, we consider
monic stable polynomials having only positive coefficients. Their stability
is characterized by the Routh--Hurwitz criterion or, equivalently, by the
Li\'enard--Chipart criterion, which requires the positivity of an appropriate
subset of the Hurwitz determinants
\cite{Hurwitz1895,LienardChipart1914,Gantmacher1959}.

Let
\[
f(x)=\sum_{k=0}^{n} a_k x^{n-k},
\qquad
g(x)=\sum_{k=0}^{n} b_k x^{n-k}
\]
be two real polynomials. Their \textit{Hadamard product} is defined by
coefficientwise multiplication as
\[
(f\circ g)(x)
=
\sum_{k=0}^{n} a_k b_k x^{n-k}.
\]
If all coefficients of \(f\) are positive, then, for \(p>0\), its
\(p\)-th \textit{Hadamard power} is defined by
\[
f^{[p]}(x)
=
\sum_{k=0}^{n} a_k^{\,p}x^{n-k}.
\]

Garloff and Wagner proved that the Hadamard product of two stable
polynomials is again stable \cite{GarloffWagner1996}. Consequently, \(f^{[p]}\) is stable for every positive integer \(p\)
whenever \(f\) is stable. This preservation property, however, does
not extend to fractional exponents.

The first negative results for fractional Hadamard powers were
obtained by Bia{\l}as and Bia{\l}as-Cie{\.z}, who showed that stability
is not preserved for all real exponents \(p>1\) and established
sufficient conditions guaranteeing stability for sufficiently large
exponents \cite{BialasBialasCiez2017}. Later, Bia{\l}as et al. proved
that every stable polynomial of degree at most five remains stable for
every exponent \(p>1\), and constructed a degree-six polynomial whose
stability-preserving exponent set is not an interval
\cite{BialasBialasCiezKudra2024}.

Related questions have also been studied for Hadamard roots.
Bia{\l}as and G{\'o}ra investigated the relation between Hadamard
factorizability and the stability of Hadamard square roots of
polynomials of degree four \cite{BialasGora2021}. More recently,
Alsaafin et al. obtained a complete characterization of the Hadamard
square root for stable polynomials of degree five
\cite{AlsaafinAlSaafinGarloff2024}.

For degree five, so it is known that \(f^{[p]}\) is stable for every
exponent \(p>1\). But the behavior of fractional Hadamard powers
in the interval \(0<p<1\)  remained unknown. The present paper aims
at filling this gap.

The analysis developed in this paper is based on a parameterization
of the degree-five Hurwitz region, by which the stability problem
reduces to the study of a single scalar function depending only on
three parameters formed from the polynomial coefficients. This
reduction yields a complete characterization of the
stability-preserving exponent set for every stable polynomial of
degree five.

Throughout this paper, let \(H_5\) denote the family of all monic
stable polynomials of degree five with positive coefficients. For
\(f\in H_5\), define
\[
\mathcal P(f)
=
\{\,p>0:\,f^{[p]}\in H_5\,\}.
\]

Our main result is a complete characterization of the
stability-preserving exponent set \(\mathcal P(f)\) for every
polynomial \(f\in H_5\).

The characterization is complemented by several structural properties
of the stability threshold. In particular, we prove that the threshold
depends smoothly on the parameters and provides a global coordinate on
the admissible parameter region. We illustrate its behavior by a
one-parameter family in \(H_5\).

The paper is organized as follows. Section~\ref{sec:main-results}
establishes the complete characterization of the
stability-preserving exponent set. Section~\ref{sec:smooth-threshold}
studies the smooth and geometric structure of the stability threshold
and illustrates its behavior along a one-parameter family of stable
polynomials. Section~\ref{sec:conclusion} summarizes the main results
and discusses directions for future research.
\section{Main Results}
\label{sec:main-results}
This section establishes the complete characterization of the
stability-preserving exponent set for degree-five stable
polynomials. The key step is to reduce the stability conditions
to the analysis of a single scalar function.

\subsection{Reduction to a Scalar Stability Function}
\label{subsec:scalar-reduction}

Let
\[
f(x)
=
x^{5}
+a_{1}x^{4}
+a_{2}x^{3}
+a_{3}x^{2}
+a_{4}x
+a_{5}
\in H_{5},
\]
and, for \(p>0\), let
\[
f^{[p]}(x)
=
x^{5}
+a_{1}^{p}x^{4}
+a_{2}^{p}x^{3}
+a_{3}^{p}x^{2}
+a_{4}^{p}x
+a_{5}^{p}
\]
denote its \(p\)-th Hadamard power.

We assume that \(f\) is stable. By the Li\'enard--Chipart
criterion \cite{LienardChipart1914}, this is equivalent to the positivity
of the second and fourth Hurwitz determinants of \(f\), i.e.,
\[
\Delta_2(f)
=
\det
\begin{pmatrix}
a_1 & a_3\\
1   & a_2
\end{pmatrix}
>0,
\]
and
\[
\Delta_4(f)
=
\det
\begin{pmatrix}
a_1 & a_3 & a_5 & 0\\
1   & a_2 & a_4 & 0\\
0   & a_1 & a_3 & a_5\\
0   & 1   & a_2 & a_4
\end{pmatrix}
>0.
\]

The first inequality is preserved under Hadamard powers. Introduce the parameters
\begin{equation}
u=\frac{a_3}{a_1a_2},
\qquad
v=\frac{a_2a_5}{a_3a_4},
\qquad
w=\frac{a_1a_4}{a_2a_3}.
\label{eq:uvw-definition}
\end{equation}
The positivity of the principal minors of the Hurwitz matrix
\cite[Theorem~2]{Kemperman1982} implies
\begin{equation}
0<u,v,w<1,
\label{eq:uvw-range}
\end{equation}
see also \cite[formula~(1.10)]{Kemperman1982}. In terms of these parameters, the fourth Hurwitz determinant of
\(f^{[p]}\) admits the factorization
\begin{equation}
\Delta_4(f^{[p]})
=
(a_1a_2a_3a_4)^p
\Bigl[
(1-u^p)(1-v^p)
-
w^p\bigl(1-(uv)^p\bigr)^2
\Bigr].
\label{eq:delta4-factorized}
\end{equation}

Since \((a_1a_2a_3a_4)^p>0\), the inequality
\(\Delta_4(f^{[p]})>0\) is equivalent to
\[
(1-u^p)(1-v^p)
>
w^p\bigl(1-(uv)^p\bigr)^2.
\]

Define
\begin{equation}
h_f(p)
=
\ln(1-u^p)
+\ln(1-v^p)
-2\ln\!\bigl(1-(uv)^p\bigr)
-p\ln w.
\label{eq:h}
\end{equation}
By \eqref{eq:uvw-range}, the logarithms are well defined for every
\(p>0\). Therefore, $\Delta_4(f^{[p]})>0$ if and only if $h_f(p)>0,$ 
and the study of stability for the Hadamard powers of \(f\)
reduces to the analysis of the scalar function \(h_f\).
%%%%%%%%%%%%%%%%%%%%%%%%%%%%%%%%%%%%%%%%%%%

\begin{lemma}
\label{lem:h-properties}
Let \(f\in H_5\), and let \(h_f\) be defined by \eqref{eq:h}. Put 
\(u=e^{-\alpha}\) and \(v=e^{-\beta}\), where
\(\alpha,\beta>0\). Then:

\begin{enumerate}
%\item[(i)]  \(h_f\in C^\infty(0,\infty)\).

\item[(i)]  The function \(h_f\) is strictly concave on \((0,\infty)\),

\item[(ii)] \(h_f(1)>0\),

\item[(iii)]
\[
\lim_{p\to0^+}h_f(p)
=
\ln\!\left(\frac{\alpha\beta}{(\alpha+\beta)^2}\right)
<0,
\]

\item[(iv)]
\[
\lim_{p\to\infty}h_f(p)=+\infty.
\]
\end{enumerate}
\end{lemma}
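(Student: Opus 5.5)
We write \(h_f\) in exponential coordinates and then treat the four items separately. Concavity (i) is the only item that needs more than a limit computation, and it reduces to a one-variable monotonicity fact.

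\textbf{Set-up.} Put \(w=e^{-\gamma}\). By \eqref{eq:uvw-range} we have \(\alpha,\beta,\gamma>0\). Let
\[
\varphi(t)=\ln\bigl(1-e^{-t}\bigr),\qquad t>0 .
\]
Since \(uv=e^{-(\alpha+\beta)}\), \eqref{eq:h} becomes
\[
h_f(p)=\varphi(\alpha p)+\varphi(\beta p)-2\varphi\bigl((\alpha+\beta)p\bigr)+\gamma p .
\]
This shows at once that \(h_f\in C^\infty(0,\infty)\).

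\textbf{Item (i).} The linear term \(\gamma p\) does not affect concavity, so we only differentiate the \(\varphi\)-terms. A direct computation gives
\[
\varphi''(t)=-\frac{e^{-t}}{(1-e^{-t})^2}=-\frac{1}{4\sinh^2(t/2)} .
\]
Hence
\[
h_f''(p)=-\frac{\alpha^2}{4\sinh^2(\alpha p/2)}-\frac{\beta^2}{4\sinh^2(\beta p/2)}+\frac{2(\alpha+\beta)^2}{4\sinh^2((\alpha+\beta)p/2)} .
\]
Multiply by \(p^2\) and set \(a=\alpha p/2\), \(b=\beta p/2\), \(\psi(x)=x^2/\sinh^2 x\). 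Then \(h_f''(p)<0\) is equivalent to
\[
\psi(a)+\psi(b)>2\psi(a+b).
\]
This follows once \(\psi\) is strictly decreasing on \((0,\infty)\), because then \(\psi(a)>\psi(a+b)\) and \(\psi(b)>\psi(a+b)\).

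To see that \(\psi\) is decreasing, note that \(x/\sinh x\) is positive and strictly decreasing. Indeed, its derivative has the sign of \(\sinh x-x\cosh x\), equivalently of \(\tanh x-x\), which is negative for \(x>0\). This monotonicity check is the only real obstacle, and it is elementary.

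\textbf{Item (ii).} We have \(h_f(1)=\ln\bigl[(1-u)(1-v)\bigr]-\ln\bigl[w(1-uv)^2\bigr]\). By \eqref{eq:delta4-factorized} with \(p=1\), \(h_f(1)>0\) is equivalent to \(\Delta_4(f)>0\). This holds because \(f\in H_5\) and we may use the Li\'enard--Chipart criterion.

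\textbf{Item (iii).} As \(t\to0^+\), we have \(1-e^{-t}=t\,(1+O(t))\), so \(\varphi(t)=\ln t+o(1)\). Substituting gives
\[
h_f(p)=\ln(\alpha p)+\ln(\beta p)-2\ln\bigl((\alpha+\beta)p\bigr)+o(1).
\]
The \(\ln p\) terms cancel, and the remaining expression tends to \(\ln\bigl(\alpha\beta/(\alpha+\beta)^2\bigr)\). By AM--GM, \(\alpha\beta/(\alpha+\beta)^2\le 1/4\), so the limit is at most \(\ln(1/4)<0\).

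\textbf{Item (iv).} As \(p\to\infty\), each \(\varphi\)-term tends to \(\ln 1=0\). The remaining term \(\gamma p=-p\ln w\) tends to \(+\infty\) because \(w<1\).
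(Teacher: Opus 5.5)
Your proposal is correct and follows the paper's proof essentially step by step. Your $\psi(x)=x^2/\sinh^2 x$ is the paper's $q$ rescaled, since $q(x)=x^2e^x/(e^x-1)^2=\psi(x/2)$, and your inequality $\tanh x<x$ is the same fact as the paper's $r(x)=x\cosh(x/2)-2\sinh(x/2)>0$. Items (ii)--(iv) match the paper, except that in (iii) you use the expansion $\ln(1-e^{-t})=\ln t+o(1)$ instead of l'Hospital's rule and add the AM--GM bound $\alpha\beta/(\alpha+\beta)^2\le 1/4$ to justify the negativity, which the paper leaves implicit.
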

\begin{proof}

%\noindent\textit{(i)}
%This follows directly from the smoothness of the logarithm and
%exponential functions.

%\medskip

\noindent\textit{(i)}
Differentiation with respect to $p$ gives
$
p^{2}h_f''(p)
=
-q(\alpha p)-q(\beta p)+2q((\alpha+\beta)p),
$
where
\(q(x)=x^{2}e^{x}/(e^{x}-1)^{2}\).
Moreover,
\(
q'(x)/q(x)
=
2/x-\coth(x/2).
\)
Let
\(r(x)=x\cosh(x/2)-2\sinh(x/2)\).
Since
\(r(0)=0\)
and
\(r'(x)=\frac{x}{2}\sinh(x/2)>0\)
for \(x>0\),
it follows that
\(r(x)>0\), and hence
\(2/x<\coth(x/2)\).
Therefore
\(q'(x)<0\),
and so \(q\) is strictly decreasing.
Since
\((\alpha+\beta)p>\alpha p\)
and
\((\alpha+\beta)p>\beta p\),
we obtain
$
2q((\alpha+\beta)p)
<
q(\alpha p)+q(\beta p),
$
and consequently
\(h_f''(p)<0\)
for every \(p>0\).

\medskip

\noindent\textit{(ii)}
Since \(f\in H_5\), one has \(\Delta_4(f)>0\). By the factorization
of \(\Delta_4\) obtained above, this is equivalent to
\[
(1-u)(1-v)-w(1-uv)^2>0,
\]
and hence \(h_f(1)>0\).

\medskip

\noindent\textit{(iii)}
By l'Hospital's rule, we obtain
\[
\begin{aligned}
\lim_{p\to0^+}
\ln\left(
\frac{1-u^p}{1-(uv)^p}
\frac{1-v^p}{1-(uv)^p}
\right)
&=
\lim_{p\to0^+}
\ln\left(
\frac{\ln u\,\ln v}
{u^p v^p \ln^2(uv)}
\right)\\
&=
\ln\left(
\frac{\ln u\,\ln v}
{\ln^2(uv)}
\right)
=
\ln\left(
\frac{\alpha\beta}{(\alpha+\beta)^2}
\right)
<0.
\end{aligned}
\]
\medskip

\noindent\textit{(iv)}
As \(p\to\infty\), the first three  logarithmic terms in
\(h_f(p)\) tend to \(0\), whereas
\(-p\ln w\to+\infty\) because \(0<w<1\).
Therefore
\(\lim_{p\to\infty}h_f(p)=+\infty\).

\end{proof}

%%%%%%%%%%%%%%%%%%%%%%%%%%%%%%%%%%%%%%%%%%%%%%%%%%%%%%%%

\begin{lemma}\label{lem:unique-threshold}
For every \(f\in H_5\), the equation \(h_f(p)=0\) has a unique
solution \(p_*(f)\in(0,1)\).
\end{lemma}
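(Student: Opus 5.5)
The plan is to combine the four properties of Lemma~\ref{lem:h-properties}: use the intermediate value theorem on $(0,1)$ for existence, and strict concavity for uniqueness on all of $(0,\infty)$.

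\emph{Existence.} The function $h_f$ is continuous on $(0,\infty)$, since it is built from logarithms of quantities that stay positive by \eqref{eq:uvw-range}. By Lemma~\ref{lem:h-properties}(iii), $\lim_{p\to0^+}h_f(p)<0$, so there is $p_0\in(0,1)$ with $h_f(p_0)<0$. By Lemma~\ref{lem:h-properties}(ii), $h_f(1)>0$. The intermediate value theorem then gives a zero $p_*\in(p_0,1)\subset(0,1)$.

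\emph{Uniqueness.} Suppose $h_f$ had two distinct zeros $0<p_1<p_2$ in $(0,\infty)$. I will derive a contradiction from strict concavity (Lemma~\ref{lem:h-properties}(i)), arguing by the position of $p_2$.
\begin{itemize}
\item If $p_2>1$, fix any $\varepsilon\in(0,p_0)$ with $h_f(\varepsilon)<0$, so that $\varepsilon<p_0<p_*<1<p_2$. Since $1$ lies strictly between $\varepsilon$ and $p_2$, strict concavity gives $h_f(1)>0$ together with $h_f(\varepsilon)<0$ and $h_f(p_2)=0$. Hence $h_f$ changes sign from negative to positive on $(\varepsilon,1)$ and returns to zero at $p_2$. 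By Rolle's theorem applied to the zeros $p_*<p_2$, there is a point in $(p_*,p_2)$ where $h_f'=0$. Because $h_f'$ is strictly decreasing, $h_f'<0$ beyond that point, and then $h_f\to-\infty$ as $p\to\infty$. This contradicts Lemma~\ref{lem:h-properties}(iv).
\item Alternatively, and more cleanly in all cases: a strictly concave function vanishing at $p_1<p_2$ is strictly positive on $(p_1,p_2)$ and strictly negative on $(p_2,\infty)$. To see the second claim, take $p>p_2$. Then $p_2$ is a strict convex combination of $p_1$ and $p$, so $0=h_f(p_2)>\lambda h_f(p_1)+(1-\lambda)h_f(p)=(1-\lambda)h_f(p)$, which forces $h_f(p)<0$. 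Moreover, concavity gives $h_f(p)\le h_f(p_2)+h_f'(p_2)(p-p_2)$ with $h_f'(p_2)<0$ (since $h_f$ decreases past a zero where it was positive just before). Therefore $h_f\to-\infty$, again contradicting (iv).
\end{itemize}
So $h_f$ has exactly one zero on $(0,\infty)$, and this zero is the $p_*$ found above, which lies in $(0,1)$.

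It also follows that $h_f<0$ on $(0,p_*)$ and $h_f>0$ on $(p_*,\infty)$. I will record this because it is what yields $\mathcal P(f)=(p_*(f),\infty)$ later.

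The only delicate step is the uniqueness argument. Specifically, one must combine strict concavity with (iv) to exclude a second zero beyond $1$; strict concavity together with (iii) alone does not suffice. Everything else is a direct application of the intermediate value theorem.
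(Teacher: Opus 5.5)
Your proof is correct and follows essentially the paper's route: existence from the intermediate value theorem using (ii) and (iii), and uniqueness because strict concavity combined with (iv) rules out a second zero. The paper gets a critical point in $(p_1,p_2)$ from the Mean Value Theorem, which must then be a global maximum; your second bullet instead bounds $h_f$ by its tangent line at $p_2$, and it suffices on its own (the first bullet only covers $p_2>1$). One small fix: your parenthetical only yields $h_f'(p_2)\le 0$, so obtain the strict inequality from strict concavity, e.g. $0=h_f(p_1)<h_f(p_2)+h_f'(p_2)(p_1-p_2)$.
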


\begin{proof}
Existence follows from Lemma~2.1(ii) and (iii) and the continuity of \(h_f\).

Suppose that \(h_f\) has two distinct positive zeros
\(0<p_1<p_2\). By the Mean Value Theorem, there exists
\(\xi\in(p_1,p_2)\) such that \(h_f'(\xi)=0\).
Since \(h_f\) is strictly concave, \(h_f''(\xi)<0\), and therefore
\(\xi\) is a local maximum which is also a global maximum.
But this contradicts Lemma~2.1(iv).

\end{proof}
%%%%%%%%%%%%%%%%%%%%%%%%%%%%%%%%%%%%%%%%%%%%%%%%%%%

\subsection{The Stability Threshold}
The properties of the stability function $h_f$ derived in Lemmata 2.1 and 2.2 are summarized in the next theorem.
\label{subsec:exponent-set}

\begin{theorem}
\label{thm:main}

Let \(f\in H_5\). Then
\[
\mathcal P(f)
=
\{\,p>0:f^{[p]}\in H_5\,\}
=
(p_*(f),\infty),
\]
where \(p_*(f)\in(0,1)\) is the unique zero of \(h_f\). Moreover,
\(p_*(f)\) is the unique solution in \((0,1)\) of the equation
\begin{equation}
(1-u^p)(1-v^p)
=
w^p\bigl(1-(uv)^p\bigr)^2.
\label{eq:threshold-equation}
\end{equation}

\end{theorem}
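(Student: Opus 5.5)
The plan is to combine the Liénard--Chipart reduction of Subsection~\ref{subsec:scalar-reduction} with the sign structure of \(h_f\) from Lemmata~\ref{lem:h-properties} and~\ref{lem:unique-threshold}.

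First, for every \(p>0\), \(f^{[p]}\) is monic with positive coefficients \(a_k^p\). By the Liénard--Chipart criterion, \(f^{[p]}\in H_5\) holds if and only if \(\Delta_2(f^{[p]})>0\) and \(\Delta_4(f^{[p]})>0\).

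The condition \(\Delta_2(f^{[p]})>0\) holds for every \(p>0\). Indeed, \(\Delta_2(f^{[p]})=(a_1a_2)^p-a_3^p=(a_1a_2)^p(1-u^p)\), and \(0<u<1\) by \eqref{eq:uvw-range}. This is the observation, made before \eqref{eq:uvw-definition}, that the first inequality is preserved.

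By the factorization \eqref{eq:delta4-factorized} and the definition \eqref{eq:h}, \(\Delta_4(f^{[p]})>0\) is equivalent to \(h_f(p)>0\). Hence
\[
\mathcal P(f)=\{p>0:\ h_f(p)>0\}.
\]

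Next I determine the sign of \(h_f\). By Lemma~\ref{lem:unique-threshold}, \(h_f\) has exactly one zero \(p_*=p_*(f)\) on \((0,\infty)\), and \(p_*\in(0,1)\). Since \(h_f\) is continuous on \((0,\infty)\), it has constant sign on each of \((0,p_*)\) and \((p_*,\infty)\).

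On \((p_*,\infty)\) the sign is positive, because \(h_f(1)>0\) by Lemma~\ref{lem:h-properties}(ii) and \(1>p_*\). Positivity also agrees with part (iv).

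On \((0,p_*)\) the sign is negative, because \(\lim_{p\to0^+}h_f(p)<0\) by part (iii). If \(h_f\) were positive on all of \((0,p_*)\), this limit could not be negative, so \(h_f<0\) there.

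Therefore \(\mathcal P(f)=(p_*,\infty)\).

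For the final claim, \(h_f(p)=0\) is obtained by taking logarithms of \eqref{eq:threshold-equation}. Both sides of \eqref{eq:threshold-equation} are strictly positive for \(p>0\), so the two equations are equivalent. Uniqueness of the zero of \(h_f\) on all of \((0,\infty)\) then gives uniqueness of the solution of \eqref{eq:threshold-equation} in \((0,1)\), and indeed in \((0,\infty)\).

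There is no real obstacle here. The only points needing care are that \(h_f(p_*)=0\) excludes \(p_*\) itself, so the interval is open at \(p_*\), and that the equivalence between the Liénard--Chipart conditions and membership in \(H_5\) requires positivity of all coefficients of \(f^{[p]}\), which is automatic.
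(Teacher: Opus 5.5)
Your proposal is correct and follows the paper's route. The paper presents the theorem as a summary of the Li\'enard--Chipart reduction together with Lemmata~\ref{lem:h-properties} and~\ref{lem:unique-threshold}, and you spell out the sign argument it leaves implicit: $h_f$ has constant sign on each side of its unique zero, fixed by $h_f(1)>0$ and the negative limit at $0^+$. Your sign argument needs uniqueness of the zero on all of $(0,\infty)$, not just in $(0,1)$; that is what the proof of Lemma~\ref{lem:unique-threshold} actually establishes, even though its statement reads as if uniqueness were only asserted within $(0,1)$.
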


%%%%%%%%%%%%%%%%%%%%%%%%%%%%%%%%%%%%%55
\begin{corollary}
\label{cor:sqrt}

For \(f\in H_5\), the following are equivalent.

\begin{enumerate}
\item[(i)]
\(f^{[1/2]}\in H_5\).

\item[(ii)]
\(p_*(f)<\frac12.\)

\item[(iii)]
\[
(1-\sqrt u)(1-\sqrt v)
>
\sqrt w\,
(1-\sqrt{uv})^2.
\]

\end{enumerate}

\end{corollary}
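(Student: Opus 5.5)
The plan is to derive all three equivalences from Theorem~\ref{thm:main} together with the sign structure of \(h_f\) described in Lemmata~\ref{lem:h-properties} and~\ref{lem:unique-threshold}.

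\emph{(i)\(\Leftrightarrow\)(ii).} By Theorem~\ref{thm:main}, \(\mathcal P(f)=(p_*(f),\infty)\). Hence \(f^{[1/2]}\in H_5\) holds exactly when \(\tfrac12\in\mathcal P(f)\), that is, when \(p_*(f)<\tfrac12\).

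\emph{(ii)\(\Leftrightarrow\)(iii).} The first step is to pin down the sign of \(h_f\) on each side of the threshold. Lemma~\ref{lem:unique-threshold} says \(p_*(f)\) is the only zero of \(h_f\) on \((0,\infty)\), and \(h_f\) is continuous there. By Lemma~\ref{lem:h-properties}(iii), \(h_f\) is negative near \(0^+\), so \(h_f<0\) on \((0,p_*(f))\). By Lemma~\ref{lem:h-properties}(ii), \(h_f(1)>0\), so \(h_f>0\) on \((p_*(f),\infty)\). Consequently, for every \(p>0\),
\[
p>p_*(f)\iff h_f(p)>0 .
\]

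Next, evaluate at \(p=\tfrac12\). Exponentiating \(h_f(1/2)\) shows that \(h_f(1/2)>0\) is equivalent to
\[
(1-u^{1/2})(1-v^{1/2})>w^{1/2}\bigl(1-(uv)^{1/2}\bigr)^2 .
\]
This exponentiation is valid because \(0<u,v,w<1\) by \eqref{eq:uvw-range}, so every factor appearing in the inequality is positive. The displayed inequality is exactly (iii), which completes the equivalence.

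Alternatively, (i)\(\Leftrightarrow\)(iii) can be seen directly. By \eqref{eq:delta4-factorized} with \(p=\tfrac12\), condition (iii) is equivalent to \(\Delta_4(f^{[1/2]})>0\). Also \(\Delta_2(f^{[1/2]})>0\), since \(\Delta_2=a_1a_2(1-u)\) and \(u^{1/2}<1\). By the Li\'enard--Chipart criterion, these two conditions give \(f^{[1/2]}\in H_5\).

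The only step requiring care is the sign analysis of \(h_f\) on either side of \(p_*(f)\), and that follows immediately from uniqueness of the zero, continuity, and the two boundary signs. I therefore expect no real obstacle.
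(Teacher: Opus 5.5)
Your proposal is correct and follows the paper's proof: (i)$\Leftrightarrow$(ii) comes directly from Theorem~\ref{thm:main}, and (ii)$\Leftrightarrow$(iii) comes from writing $h_f(1/2)$ as a single logarithm. Your explicit sign analysis of $h_f$ on either side of $p_*(f)$ spells out the step $h_f(1/2)>0\iff p_*(f)<\tfrac12$, which the paper leaves implicit, and your alternative route via \eqref{eq:delta4-factorized} and the Li\'enard--Chipart criterion is also valid, provided you use $\Delta_2(f^{[1/2]})=\sqrt{a_1a_2}\,(1-\sqrt u)$ rather than $\Delta_2(f)$.
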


\begin{proof}

By Theorem~\ref{thm:main},
\(f^{[1/2]}\in H_5\) if and only if
\(p_*(f)<\frac12\). On the other hand,

\[
h_f\!\left(\frac12\right)
=
\ln\!\left(
\frac{(1-\sqrt u)(1-\sqrt v)}
{\sqrt w\,(1-\sqrt{uv})^2}
\right),
\]

so \(h_f(1/2)>0\) is equivalent to

\[
(1-\sqrt u)(1-\sqrt v)
>
\sqrt w\,
(1-\sqrt{uv})^2.
\]

\end{proof}

\begin{remark}
The threshold \(p_*(f)\) can be computed by using the strict concavity
of \(h_f\). We first observe that \(h_f'(p)>0\) for every \(p>0\).
Indeed, Lemma~2.1(i) shows that \(h_f'\) is strictly decreasing, while
\[
\lim_{p\to\infty} h_f'(p)=-\ln w>0.
\]
Thus
\[
h_f'(p)>-\ln w>0,\qquad p>0.
\]

Starting with \(p_0=1\), define
\[
q=p_0-\frac{h_f(p_0)}{h_f'(p_0)}.
\]
Since
\[
\frac{d}{dp}\bigl(ph_f'(p)-h_f(p)\bigr)
=ph_f''(p)<0
\]
and
\[
\lim_{p\to\infty}\bigl(ph_f'(p)-h_f(p)\bigr)=0,
\]
 one has \(q>0\) (note that the linear terms \(p\ln w\) cancel).
Let \(p_1\) be the zero of the secant line through
\((q,h_f(q))\) and \((p_0,h_f(p_0))\), that is,
\[
p_1
=
q-\frac{h_f(q)(p_0-q)}
{h_f(p_0)-h_f(q)}.
\]
Strict concavity and the sign properties of \(h_f\) imply
\[
0<q<p_*(f)<p_1<p_0.
\]
This step can be iterated to generate a monotonically decreasing
sequence \((p_n)_{n\in\mathbb N}\) converging to \(p_*(f)\), providing
at each step an upper bound for it.
\end{remark}
We conclude this section with two examples illustrating the threshold
characterization.

\begin{example} 
For \(\varepsilon>0\), consider the family 
\(f_\varepsilon(x)=(x+\varepsilon)^5\). Its parameters are 
\(u=v=\frac15\) and \(w=\frac14\), independently of \(\varepsilon\). 
The corresponding stability threshold is 
\(p_*(f_\varepsilon)\approx0.5195\), and therefore 
\(\mathcal P(f_\varepsilon)=(0.5195\ldots,\infty)\). Thus, the threshold is 
independent of \(\varepsilon\), even though all zeros converge to the 
origin as \(\varepsilon\to0^+\). 
\end{example} 
 
\begin{example} 
Consider the polynomial from Example~3.9 of 
\cite{AlsaafinAlSaafinGarloff2024}, 
\[ 
f(x)=x^5+15x^4+10x^3+30x^2+10x+10. 
\] 
Its zeros are approximately 
\[ 
-14.4485,\qquad 
-0.0890\pm1.2238i,\qquad 
-0.1868\pm0.6518i. 
\] 
Its parameters are \(u=\frac15\), \(v=\frac13\), and 
\(w=\frac12\). The corresponding stability threshold is 
\(p_*(f)\approx0.8449\), and therefore 
\(\mathcal P(f)=(0.8449\ldots,\infty)\). Since 
\(p_*(f)>\frac12\), the Hadamard square root is not stable. 
Hence, Corollary~2.4 recovers the conclusion of 
Example~3.9 in \cite{AlsaafinAlSaafinGarloff2024}. 
\end{example}
\section{Smooth and Geometric Structure of the Stability Threshold}
\label{sec:smooth-threshold}

Theorem~\ref{thm:main} shows that every polynomial
\(f\in H_5\) possesses a unique stability threshold
\(p_*(f)\in(0,1)\), which depends only on the parameters
\(u,v,w\). It is therefore natural to regard the threshold as a
function on the  parameter space. For
\((u,v,w)\in(0,1)^3\) and \(p>0\), define
\[h(u,v,w,p)=\ln(1-u^p)+\ln(1-v^p)-2\ln\!\bigl(1-(uv)^p\bigr)-p\ln w,\]
and let
\[
\Omega
=
\Bigl\{
(u,v,w)\in(0,1)^3:
h(u,v,w,1)>0
\Bigr\}.
\]
Then \(h_f(p)=h(u,v,w,p)\) for the  triple associated with
\(f\). Since \(h(u,v,w,1)\) is continuous as a function of
\((u,v,w)\), the set \(\Omega\) is an open subset of
\((0,1)^3\). The arguments of
Lemmata~\ref{lem:h-properties} and~\ref{lem:unique-threshold} therefore
apply to every \((u,v,w)\in\Omega\). Hence the equation
\(h(u,v,w,p)=0\) has a unique solution
\(p_*(u,v,w)\in(0,1)\), satisfying
\(h(u,v,w,p)<0\) for
\(0<p<p_*(u,v,w)\) and
\(h(u,v,w,p)>0\) for
\(p>p_*(u,v,w)\). The following results show that the stability
threshold depends smoothly on the parameters and provides a
global smooth coordinate on the admissible parameter region.

\begin{theorem}
\label{thm:smooth-threshold}

The map
\[
p_*:\Omega\longrightarrow(0,1),\qquad
(u,v,w)\longmapsto p_*(u,v,w),
\]
is of class \(C^\infty\).

\end{theorem}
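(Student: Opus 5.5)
The plan is to apply the implicit function theorem to the equation \(h(u,v,w,p)=0\) on the open set
\[
U=\Omega\times(0,\infty)\subset\mathbb R^4 .
\]

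First, \(h\) is \(C^\infty\) on \(U\). For \((u,v,w)\in(0,1)^3\) and \(p>0\) we have \(u^p=e^{p\ln u}\in(0,1)\), and likewise \(v^p,(uv)^p\in(0,1)\). So each logarithm \(\ln(1-\cdot)\) is composed with a smooth function taking values in \((0,1)\). The term \(p\ln w\) is smooth for \(w>0\). Hence \(h\) is smooth as a composition of smooth maps.

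Second, the partial derivative in \(p\) is nonzero at every zero. I will show \(\partial_p h(u,v,w,p)>0\) for all \((u,v,w)\in\Omega\) and all \(p>0\), using the argument of the Remark following Corollary~\ref{cor:sqrt}. Lemma~\ref{lem:h-properties}(i) applies to every triple in \(\Omega\), so \(\partial_p h\) is strictly decreasing in \(p\). Next I compute the limit of \(\partial_p h\) as \(p\to\infty\). The derivative of \(\ln(1-u^p)\) is \(-u^p\ln u/(1-u^p)\), which tends to \(0\), and the same holds for the \(v\)- and \(uv\)-terms. Therefore \(\partial_p h\to-\ln w>0\). A strictly decreasing function with this limit satisfies \(\partial_p h>-\ln w>0\) everywhere. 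In particular, \(\partial_p h(u,v,w,p_*(u,v,w))\neq 0\).

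Third, the local implicit function argument. Fix \(x_0=(u_0,v_0,w_0)\in\Omega\) and \(p_0=p_*(x_0)\in(0,1)\). By the \(C^\infty\) implicit function theorem there are neighbourhoods \(V\ni x_0\) (with \(V\subset\Omega\)) and \(I\ni p_0\) (with \(I\subset(0,\infty)\)), and a \(C^\infty\) map \(g:V\to I\), such that for \(x\in V\) and \(p\in I\), \(h(x,p)=0\) holds if and only if \(p=g(x)\). Since \(h(x,g(x))=0\), uniqueness of the positive zero (Lemma~\ref{lem:unique-threshold}, valid on all of \(\Omega\) as noted before the theorem) gives \(g(x)=p_*(x)\) on \(V\). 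Thus \(p_*\) coincides locally with a smooth function, and so it is \(C^\infty\) on \(\Omega\). Smoothness is local, so no global argument is needed.

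The main obstacle is only bookkeeping. We must check that the concavity argument of Lemma~\ref{lem:h-properties}(i) uses only \(u,v\in(0,1)\) and not the polynomial \(f\) itself, so that it holds for arbitrary triples in \(\Omega\). We must also check that the strict monotonicity of \(\partial_p h\), combined with its limit, gives a positive lower bound. Both facts are already established or noted in the text, so the proof is short.
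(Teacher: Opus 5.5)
Your proof is correct and follows the paper's route. Like the paper, you use joint smoothness of $h$, nonvanishing of $\partial_p h$ at the zero, the implicit function theorem, and uniqueness of the positive zero to identify the local solution with $p_*$. The only difference is how you get the sign of $\partial_p h$. You obtain $\partial_p h>0$ for all $p>0$ from the strictly decreasing derivative with limit $-\ln w>0$, the argument of the paper's Remark and of the proof of Theorem~\ref{thm:global-coordinates}. The paper instead obtains $\partial_p h(u,v,w,p_*)>0$ from the concavity tangent-line inequality between $p_*$ and $1$, using $h(u,v,w,1)>0$. Both are valid.
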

\begin{proof}
The function \(h:\Omega\times(0,\infty)\to\mathbb{R}\) is of class
\(C^\infty\). Fix \((u,v,w)\in\Omega\), and let
\(p_*=p_*(u,v,w)\). Since \(p\mapsto h(u,v,w,p)\) is strictly
concave, \(h(u,v,w,p_*)=0\), and \(h(u,v,w,1)>0\), it follows that
\[
h(u,v,w,1)
<
h(u,v,w,p_*)
+
\partial_p h(u,v,w,p_*)(1-p_*),
\]
whence \(\partial_p h(u,v,w,p_*)>0\).

The Implicit Function Theorem therefore determines \(p\) locally as a
\(C^\infty\)-function of \((u,v,w)\). Since the defining zero is unique
by the preceding discussion, these local representations agree wherever
their domains intersect and determine a global \(C^\infty\)-map
\[
p_*:\Omega\longrightarrow(0,1).
\]
\end{proof}
%%%%%%%%%%%%%%%%%%%%%%%%%%%%%%%%%%%%%%%%%%%%%%%%%%%%%%%%%%%%%%%%%%%%%%%%
Theorem~\ref{thm:smooth-threshold} shows that the stability threshold
depends smoothly on the  parameters. Since the defining
equation can be solved explicitly for the third  parameter,
the threshold itself may be used in place of \(w\). The next result
shows that this yields a global smooth parametrization of the
admissible parameter region.

\begin{theorem}
\label{thm:global-coordinates}

Let
\[
\Phi:(0,1)^2\times(0,1)\longrightarrow\Omega
\]
be defined by
\[
\Phi(u,v,p)
=
\left(
u,
v,
\left(
\frac{(1-u^p)(1-v^p)}
{\left(1-(uv)^p\right)^2}
\right)^{1/p}
\right).
\]
Then \(\Phi\) is a \(C^\infty\)-diffeomorphism between
\((0,1)^2\times(0,1)\) and \(\Omega\), with inverse
\[
\Phi^{-1}(u,v,w)
=
\bigl(u,v,p_*(u,v,w)\bigr).
\]
In particular, the stability threshold provides a global smooth
coordinate on the admissible parameter region \(\Omega\).

\end{theorem}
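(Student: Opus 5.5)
The plan is to verify directly that \(\Phi\) is a well-defined smooth map into \(\Omega\) and that \((u,v,w)\mapsto(u,v,p_*(u,v,w))\) is a two-sided inverse. Smoothness of the inverse then follows from Theorem~\ref{thm:smooth-threshold}.

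\emph{Step 1: \(\Phi\) takes values in \((0,1)^3\) and is smooth.} Put \(a=u^p\) and \(b=v^p\), both in \((0,1)\), and let \(R=(1-a)(1-b)/(1-ab)^2\). Then \(R>0\), and the third component is \(W(u,v,p)=R^{1/p}\). I will show \(R<1\), which gives \(W\in(0,1)\). A direct expansion gives
\[
(1-ab)^2-(1-a)(1-b)=a+b-3ab+a^2b^2 .
\]
With \(t=\sqrt{ab}\in(0,1)\) and \(a+b\ge 2t\), this is at least
\[
2t-3t^2+t^4=t(1-t)^2(t+2)>0 .
\]
Since \(u,v,p\) range over open sets where all expressions are positive, \(W\) is a composition of \(C^\infty\) functions. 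Hence \(\Phi\) is \(C^\infty\).

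\emph{Step 2: \(\Phi(u,v,p)\in\Omega\).} By construction \(h(u,v,W,p)=0\) with \(p\in(0,1)\); it remains to show \(h(u,v,W,1)>0\). The proofs of Lemma~\ref{lem:h-properties}(i), (iii), (iv) use only \(u,v,w\in(0,1)\), not stability. So \(p\mapsto h(u,v,W,p)\) is strictly concave and tends to \(+\infty\) as \(p\to\infty\). As in the Remark after Corollary~\ref{cor:sqrt}, its derivative is strictly decreasing with limit \(-\ln W>0\), so it is positive everywhere. Thus \(h(u,v,W,\cdot)\) is strictly increasing, and \(h(u,v,W,1)>h(u,v,W,p)=0\).

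\emph{Step 3: bijectivity.} For \((u,v,w)\in\Omega\), the equation \(h(u,v,w,p_*)=0\) rearranges exactly to \(w=W(u,v,p_*)\), so \(\Phi(u,v,p_*(u,v,w))=(u,v,w)\); this gives surjectivity. Conversely, suppose \(\Phi(u,v,p)=(u,v,w)\). Then \(p\in(0,1)\) is a zero of \(h(u,v,w,\cdot)\), and by Step 2 \((u,v,w)\in\Omega\). The uniqueness of the zero (Lemma~\ref{lem:unique-threshold} applied on \(\Omega\)) gives \(p=p_*(u,v,w)\). Hence \(\Phi\) is injective and \(\Phi^{-1}\) has the stated form.

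Finally, \(\Phi^{-1}\) is \(C^\infty\) by Theorem~\ref{thm:smooth-threshold}, so \(\Phi\) is a diffeomorphism. The only real obstacle is Step 1, the inequality \(R<1\), which the factorization \(t(1-t)^2(t+2)\) settles. The key observation elsewhere is that the concavity and limit arguments do not depend on \(f\) being stable.
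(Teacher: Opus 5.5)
Your proposal is correct and follows the paper's proof essentially step by step: you show \(W\in(0,1)\), get membership in \(\Omega\) from the strict increase of \(h(u,v,W,\cdot)\) (concavity plus the limit \(-\ln W>0\) of the derivative), prove surjectivity by solving \(h=0\) for \(w\) and injectivity via uniqueness of the zero, and take smoothness of the inverse from Theorem~\ref{thm:smooth-threshold}. The only deviation is Step 1: your factorization \(t(1-t)^2(t+2)\) is valid but not needed, since the paper obtains \(R<1\) at once from \(1-(uv)^p>1-u^p>0\) and \(1-(uv)^p>1-v^p>0\), so that step is not really an obstacle.
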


\begin{proof}
Let \((u,v,p)\in(0,1)^2\times(0,1)\), and let \(w\) be the third
component of \(\Phi(u,v,p)\). Since \(0<u,v<1\), we have
\[
1-(uv)^p>1-u^p
\qquad\text{and}\qquad
1-(uv)^p>1-v^p.
\]
Hence
\[
(1-u^p)(1-v^p)
<
\bigl(1-(uv)^p\bigr)^2.
\]

and therefore \(0<w<1\). By the definition of \(w\),
\[
h(u,v,w,p)=0.
\]
For fixed \(u,v,w\in(0,1)\), the function
\(t\mapsto h(u,v,w,t)\) is strictly concave. Hence its derivative is
strictly decreasing, and
\[
\lim_{t\to\infty}\partial_t h(u,v,w,t)=-\ln w>0.
\]
Thus \(\partial_t h(u,v,w,t)>0\) for \(t>0\), so
\(t\mapsto h(u,v,w,t)\) is strictly increasing. Since \(p<1\),
\[
h(u,v,w,1)>h(u,v,w,p)=0.
\]
Hence \((u,v,w)\in\Omega\). The uniqueness of the zero gives
\(p=p_*(u,v,w)\), and \(\Phi\) is well defined.

Let \((u,v,w)\in\Omega\) and put \(p=p_*(u,v,w)\). Solving
\(h(u,v,w,p)=0\) for \(w\) gives
\[
w=
\left(
\frac{(1-u^p)(1-v^p)}
{\bigl(1-(uv)^p\bigr)^2}
\right)^{1/p},
\]
hence
\[
\Phi\bigl(u,v,p_*(u,v,w)\bigr)=(u,v,w).
\]
Thus \(\Phi\) is surjective. If
\[
\Phi(u_1,v_1,p_1)=\Phi(u_2,v_2,p_2),
\]
then \(u_1=u_2\) and \(v_1=v_2\). Writing \(w\) for the common third
component, the first part of the proof gives
\[
p_1=p_*(u_1,v_1,w)
   =p_*(u_2,v_2,w)
   =p_2.
\]
Hence \(\Phi\) is injective, and
\[
\Phi^{-1}(u,v,w)
=
\bigl(u,v,p_*(u,v,w)\bigr).
\]
The map \(\Phi\) is of class \(C^\infty\), and by
Theorem~\ref{thm:smooth-threshold}, so is its inverse. Hence
\(\Phi\) is a \(C^\infty\)-diffeomorphism.
\end{proof}
%%%%%%%%%%%%%%%%%%%%%%%%%%%%%%%%%%%%%%%%%%%%%%%%%%%%%%%%
As an application of the preceding results, consider the one-parameter
family
\[
f_{\varepsilon}(x)
=
(x+a)^3(x^2+2\varepsilon x+1),
\qquad
a>0,\;0<\varepsilon<1.
\]
Its zeros are
\(-a,-a,-a,-\varepsilon\pm i\sqrt{1-\varepsilon^2}\), so the
complex-conjugate pair approaches the imaginary axis as
\(\varepsilon\to0^+\), while
\(f_{\varepsilon}\in H_5\) for every
\(\varepsilon\in(0,1)\). Expanding \(f_{\varepsilon}\) gives
\[
f_{\varepsilon}(x)
=
x^5+(3a+2\varepsilon)x^4
+(3a^2+6a\varepsilon+1)x^3
+(a^3+6a^2\varepsilon+3a)x^2
+(2a^3\varepsilon+3a^2)x+a^3,
\]
and hence determines the smooth curve
\[
\Gamma_a(\varepsilon)
=
(u_\varepsilon,v_\varepsilon,w_\varepsilon),
\qquad
0<\varepsilon<1,
\]
where
\[
u_{\varepsilon}
=
\frac{a^3+6a^2\varepsilon+3a}
{(3a+2\varepsilon)(3a^2+6a\varepsilon+1)},
\qquad
v_{\varepsilon}
=
\frac{(3a^2+6a\varepsilon+1)a^3}
{(a^3+6a^2\varepsilon+3a)(2a^3\varepsilon+3a^2)},
\]
and
\[
w_{\varepsilon}
=
\frac{(3a+2\varepsilon)(2a^3\varepsilon+3a^2)}
{(3a^2+6a\varepsilon+1)(a^3+6a^2\varepsilon+3a)}.
\]

Since
\(\Gamma_a:(0,1)\to\Omega\)
is smooth, Theorem~\ref{thm:smooth-threshold} shows that the function
\[
\varepsilon
\longmapsto
p_*\bigl(\Gamma_a(\varepsilon)\bigr)
=
p_*(f_\varepsilon)
\]
is of class \(C^\infty\).
Moreover,
\(p_*(f_\varepsilon)\)
is characterized implicitly by
\[
H_a\!\left(p_*(f_\varepsilon),\varepsilon\right)=0,
\]
where
\[
H_a(p,\varepsilon)
=
\ln(1-u_\varepsilon^p)
+\ln(1-v_\varepsilon^p)
-2\ln\!\bigl(1-(u_\varepsilon v_\varepsilon)^p\bigr)
-p\ln w_\varepsilon.
\]

Differentiation with respect to \(\varepsilon\) gives
\[
\frac{d}{d\varepsilon}p_*(f_\varepsilon)
=
-
\frac{
\partial_\varepsilon H_a
\!\left(p_*(f_\varepsilon),\varepsilon\right)}
{
\partial_p H_a
\!\left(p_*(f_\varepsilon),\varepsilon\right)}.
\]

Figure~\ref{fig:threshold-boundary} illustrates the variation of the
stability threshold along the trajectory
\(\Gamma_a(\varepsilon)\) for representative values of \(a\).

\begin{figure}[htbp]
\centering
\includegraphics[width=0.82\textwidth]{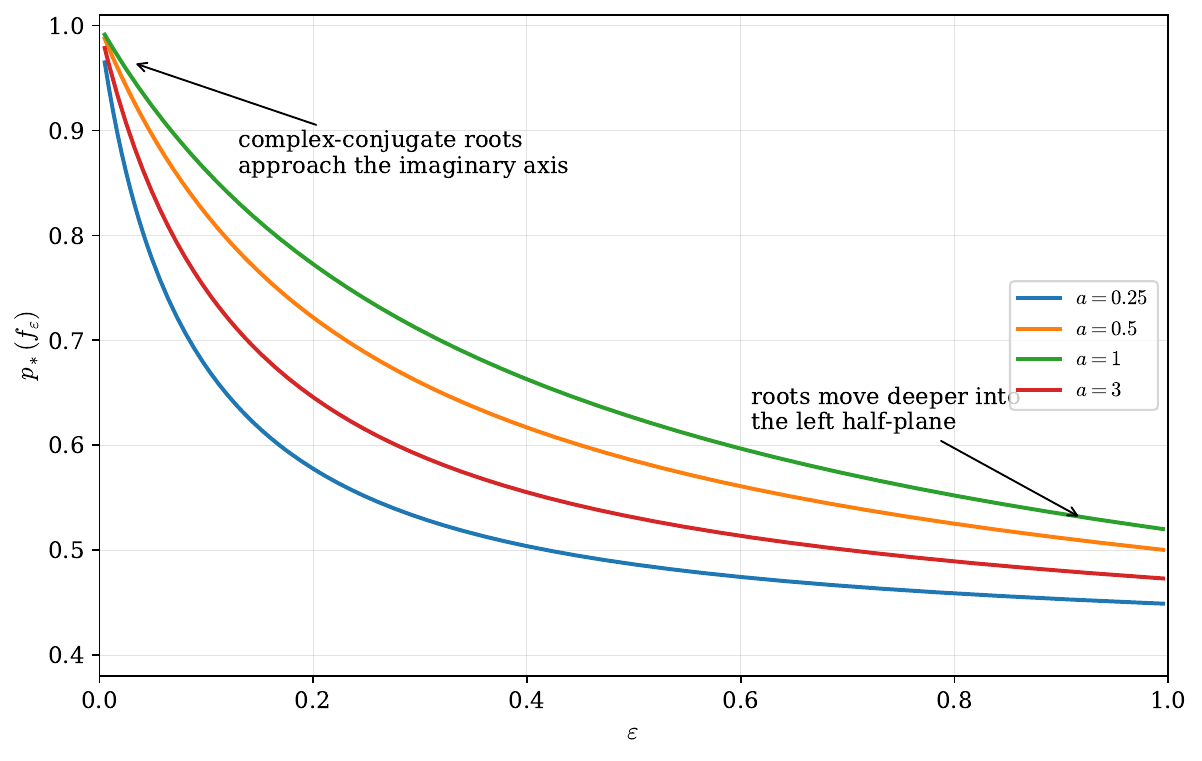}
\caption{Variation of the stability threshold along the
trajectory \(\Gamma_a(\varepsilon)\) associated with
\(f_\varepsilon(x)=(x+a)^3(x^2+2\varepsilon x+1)\) for representative
values of \(a\).}
\label{fig:threshold-boundary}
\end{figure}
\section{Conclusion} \label{sec:conclusion}

This paper provides a complete characterization of the
stability-preserving exponent set for fractional Hadamard powers of
degree-five stable polynomials with positive coefficients.
Using a  parameterization of the Hurwitz region, the
stability problem is reduced to the analysis of a single logarithmic
scalar function. This reduction leads to the existence of a unique
stability threshold \(p_*(f)\in(0,1)\), and consequently to the complete
description $
\mathcal P(f)=\bigl(p_*(f),\infty\bigr)$
for every polynomial \(f\in H_5\).

The  formulation also reveals additional geometric structure.
The stability threshold depends smoothly on the  parameters,
and the threshold itself provides a global smooth coordinate on the
admissible parameter region. The one-parameter family considered in
Section~\ref{sec:smooth-threshold} illustrates this geometric
interpretation by showing how the threshold evolves continuously as the 
pair of complex-conjugate zeros approaches the imaginary axis.

The characterization obtained in this paper relies on a feature that is
specific to degree five, namely that the
stability problem is governed by a single Hurwitz determinant.
Beginning with degree six, several Hurwitz determinants interact
simultaneously, making an analogous reduction substantially more
difficult. Nevertheless, the smooth geometric viewpoint developed in this paper provides a starting point for extending these ideas to higher degrees.
\section*{Data Availability Statement}

The data on which Figure~1 is based are available on request from the first author.

\section*{Declaration of Funding}
The work of the third author was supported by the SRP Program of the HTWG Konstanz - University of Applied Sciences.

\bibliographystyle{plain}
\bibliography{references}
\end{document}